\newtheorem{theorem}{Theorem} 
\newtheorem{lemma}{Lemma}
\newtheorem{corollary}{Corollary}
\newenvironment{proof}[1][Proof]{\begin{trivlist}
\item[\hskip \labelsep {\bfseries #1}]}{\end{trivlist}}
\newcommand{\qed}{\nobreak \ifvmode \relax \else
      \ifdim\lastskip<1.5em \hskip-\lastskip
      \hskip1.5em plus0em minus0.5em \fi \nobreak
      \vrule height0.30em width0.4em depth0.25em\fi}
 \author{Safari Mukeru\\
\footnotesize{\em Department of Decision Sciences}\\ 
\footnotesize{University of South Africa, P. O. Box 392, Pretoria, 0003. South Africa}\\
\footnotesize{e-mail: mukers@unisa.ac.za}}
\title{{Some applications of the Menshov--Rademacher theorem}}
\date{}
\begin{document}

\maketitle

\pagenumbering{arabic}

\begin{abstract}
Given a sequence $(X_n)$ of real or complex random variables and a sequence of numbers $(a_n)$, an interesting  problem is to determine the conditions under which  the series $\sum_{n=1}^\infty a_n X_n$ is almost surely convergent. This paper extends the classical Menshov--Rademacher theorem on the convergence of orthogonal series  to general series of dependent random variables and derives interesting sufficient conditions for the almost everywhere convergence of trigonometric series with respect to singular measures  whose Fourier transform decays to 0 at infinity with positive rate.

%\vspace*{2.5mm}

%{\it Pour une suite de variables al\'eatoires  r\'eelles ou complexes $(X_n)$ et une suite de nombres $(a_n)$,  une question importante est de savoir sous quelles conditions la s\'erie al\'eatoire $\sum_{n=1}^\infty a_n X_n$  est convergente presque s\^urement. Cette note g\'en\'eralise le th\'eor\`eme classique de Menshov--Rademacher sur la convergence de s\'eries orthogonales aux s\'eries plus g\'en\'erales des variables al\'eatoires d\'ependentes et en d\'eduit des conditions suffisantes  pour la convergence presque s\^ure des s\'eries trigonom\'etriques par rapport \`a des mesures singuli\`eres dont la transform\'ee de Fourier tend vers $0$ \`a l'infini avec un taux positif. }

\end{abstract}
{\bf Key words:} Dependent random series, Menshov--Rademacher theorem, trigonometric series. \\ 

\vspace*{2.5mm}

\section{Introduction}
A classical fundamental result obtained independently by Menshov and Rademacher in the 1920's states that if the condition $\sum_{n=1}^\infty |a_n|^2 \log_2^2(n+1) < \infty$ is satisfied for a given sequence of real or complex numbers $(a_n)$, then for any sequence of orthonormal sequence $(\varphi_n)$ of $L^2$ functions on the unit circle $\mathbb{T}$, the series $\sum_{k=1}^\infty a_n \varphi(x)$ converges almost everywhere on  $\mathbb{T}$ (with respect to the Lebesgue measure). This condition has been extended to the $L^2$ space of general measure spaces and more recently Paszkiewicz \cite{Paszkiewicz1, Paszkiewicz2} and Bednorz \cite{Bednorz2} obtained a  necessary and sufficient condition on the sequence $(a_n)$ for the convergence a.e. of  $\sum_{k=1}^\infty a_n \varphi(x)$ (the condition is expressed in terms of the existence of a majorising measure on the set $\{\sum_{k = 1}^n a_k^2: n \geq 1\} \cup\{0\}$).  Menshov and Rademacher theorem was probably the first result related to random series of {\it non-independent} random variables. The problem of convergence of random series with dependent random variables and the problem of extending limit theorems of probability theory related to iid variables to dependent random variables has attracted a lot of attention since the introduction of sequences of negatively associated random variables  by Joag-Dev and Proschan \cite{Joag-Dev}. However the problem remains largely open. 

The problem of dependent random variables is very wide and it is difficult to find a general criteria that applies to all. In the literature the general approach is to delineate certain forms of dependency and classes of random variables and determine an appropriate condition for the a.s. convergence. {Matu\l{}a} \cite{Matula} proved that if $(X_n)$ are negatively associated real random variables with finite second moments then the condition $\sum_{n=1}^\infty \mathbb{E}(|X_n|^2) < \infty$ implies the a.s. convergence of $\sum_{n=1}^\infty X_n$. This has been extended to random vectors in a Hilbert space by Ko, Kim and Han \cite{Ko}.

Antonini, Kozachenko and Volodin \cite{Antonini} considered the case of sub-Gaussian random variables $(X_n)$ exhibiting certain dependence structures (negative association, $m$-dependence and $m$-acceptability) and derived interesting sufficient conditions for the a.s. convergence.  It is shown in \cite{Mukeru_JOTP_2020} that if $(X_n)$ can be expressed as linear combinations $X_n = \sum_{k=1}^n a_{n,k} Z_k$ for a fixed  sequence $(Z_n)$ of i.i.d random variables (with zero mean and unit variance) and complex numbers $(a_{n,k})$, then the condition  $\sum_{n=1}^\infty \left(\sum_{k=1}^\infty |a_{n+k-1,k}|^2\right)^{1/2} < \infty$ is sufficient for the a.s. convergence of the series $\sum_{n=1}^\infty X_n$. (For the problem of  limit theorems of sequences of dependent random variables we refer to Naderi et al. \cite{Naderi} and references therein for some recent developments.)  

In this paper we consider the problem of convergence of the random series $\sum_{n=1}^\infty a_n X_n$ where $(a_n)$ is a sequence of  real or complex numbers and $(X_n)$ a sequence of real or complex random variables in the most general dependence case. Our approach consists of extending the  Menshov--Rademacher theorem from the classical case of orthonormal series  to general dependent random variables. We thus obtain an explicit sufficient condition for the almost sure convergence of random series in their most generality. This condition is also applied to study almost everywhere convergence of trigonometric Fourier series on some subsets of the unit circle of Lebesgue measure zero. 

Our results can be summarised are as follows:
If $\sum_{m,n=1}^\infty |a_n| |a_m| |\mathbb{E}(X_n \overline{X_m})| \log_2(n+1) \log_2(m+1) < \infty$, then the series $\sum_{n =1}^\infty a_n X_n$ converges a.s. From this result, we derive the following: If the linear operator defined by the matrix $A = (|\mathbb{E}(X_n \overline{X_m})|)$ in $\ell^2$ is bounded, then Menshov--Rademacher condition $\sum_{n=1}^\infty |a_n|^2 \log_2^2(n+1) < \infty$ for the a.s. convergence of orthonormal series is  also sufficient for the a.s. convergence of the series $\sum_{n=1}^\infty a_n X_n.$ The same is true if $(a_n)$ are real numbers  and $(X_n)$ are real random variables such that $\mathbb{E}(X_n X_m) \leq 0$ for all $m \ne n$. If the matrix $(n^{-b} m^{-b} A_{n,m})$ is bounded for some $b\geq 0$, then the condition $\sum_{n=1}^\infty |a_n|^2 n^{2b} \log_2^2(n+1) < \infty$ is sufficient for the a.s. convergence of $\sum_{n=1}^\infty a_n X_n$.  Finally if $\mu$ is a Borel probability measure on the unit circle $\mathbb{T}$ such that its Fourier transform satisfies $|\hat \mu(n)| \leq K |n|^{-a}$ for some $a \geq 0$ and there exists $b > (1-a)/2$ such that $\sum_{n\in \mathbb{Z}} |a_n|^2 |n|^{2b} \log_2^2(n+1) < \infty$, then the Fourier trigonometric series $\sum_{n \in \mathbb{Z}} a_n \exp(2\pi i n t)$ converges $\mu$-almost everywhere on $\mathbb{T}$. This implies that the Fourier series of a well-behaved $L^2$ function $f$ on $\mathbb{T}$ in the sense that $\sum_{n=1}^\infty |\hat f(n)|^2 n^{2b} \log_2^2(n+1) < \infty$ for some number $0 \leq b \leq 1/2$ cannot diverge everywhere on a subset of Fourier dimension $> 1 - 2b$. In particular if a function belongs to the Sobolev space $H^p(\mathbb{T})$ for some $0 \leq p \leq 1/2$, then its Fourier series converges $\mu$-almost everywhere on every subset $E$ of $\mathbb{T}$ of Fourier dimension $> 1 - 2p$ where $\mu$ is a Borel measure supported by $E$ such that $|\mu(n)| = o(|n|^{-(1-2p)}).$

The paper concludes with a remark concerning the particular case of Gaussian random variables were we prove (using the classical Sudakov--Fernique inequality) that the condition $\sum_{m,n=1}^\infty |a_n| |a_m| |\mathbb{E}(X_n \overline{X_m})| < \infty$ is enough for the a.s. convergence of $\sum_{n=1}^\infty a_n X_n$.

\section{An extension of Menshov--Rademacher theorem}
For a sequence $(a_n)$ of numbers the classical Menshov--Rademacher theorem says that if the condition 
\begin{eqnarray}
\sum_{n=1}^\infty |a_n|^2\log_2^2(n+1) < \infty,
\end{eqnarray}
then the series $\sum_{n=1}^\infty a_n \varphi(x)$ converges almost everywhere for any sequence $(\varphi_n)$ of  orthonormal functions in $L^2(\mathbb{T})$. (Here $\mathbb{T}$ is the unit circle $\mathbb{R}/\mathbb{Z}$.) In this paper we relax the condition of orthogonality and extend Menshov--Rademacher theorem to series $\sum_{n=1}^\infty a_n \varphi(x)$ in that general case.  
 In general we consider random variables in the $L^2(\Omega)$ space of a certain probability space.  The usual  inner product in  $L^2(\Omega)$  is
$\langle X,Y\rangle = \mathbb{E}(X \overline{Y}).$ 

\begin{theorem}\label{Nderhe}
For any sequence of real or complex numbers $(a_n)$ and a sequence $(X_n)$ of real or complex random variables such that $\mathbb{E}(|X_n|^2) = 1$ for all $n$, if  
 $$L:= \sum_{n,m=1}^\infty |a_n| |a_m| | \mathbb{E}(X_n \overline{X_m})| \log_2(n+1) \log_2(m+1) < \infty,$$ then the series 
$\sum_{n=1}^\infty a_n X_n$ converges almost surely. Moreover, 
$$\mathbb{E}\left(\sup_{n\in \mathbb{N}}|a_1 X_1 + a_2 X_2 + \ldots + a_n X_n|^2\right) \leq 8 L.$$
\end{theorem}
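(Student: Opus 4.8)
My plan is to isolate a single maximal inequality as the engine and to deduce both assertions from it. Concretely, I will prove that for every integer $M \ge 0$,
$$\mathbb{E}\left(\sup_{N > M}\left|\sum_{n=M+1}^N a_n X_n\right|^2\right) \le 8\, L_M, \qquad L_M := \sum_{n,m > M}|a_n||a_m|\,|\mathbb{E}(X_n\overline{X_m})|\,\log_2(n+1)\log_2(m+1).$$
The \emph{moreover} bound is the case $M=0$. For the almost sure convergence I will observe that $L_M \to 0$ as $M \to \infty$, since $L_M$ is the tail of the convergent double series defining $L$; writing $S_N = \sum_{n=1}^N a_n X_n$, the nonnegative quantities $\sup_{N,N'>M}|S_N - S_{N'}|$ are bounded by twice the supremum above, hence tend to $0$ in $L^2$, and being non-increasing in $M$ they also tend to $0$ almost surely. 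Thus $(S_N)$ is almost surely Cauchy and converges a.s. Everything therefore reduces to the displayed maximal inequality.

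The heart is that inequality, which I will prove by the Rademacher--Menshov dyadic decomposition adapted to the correlated setting. Writing $Y_n = a_n X_n$, I group indices into the dyadic blocks $B_p = \{\,n : 2^{p-1} < n \le 2^p\,\}$ and split each partial sum, for $n \in B_P$, as $S_n = \sum_{p<P} U_p + R_n$, where $U_p = \sum_{k\in B_p} Y_k$ is a full block sum and $R_n = \sum_{2^{P-1}<k\le n} Y_k$ is the partial sum inside the last block reached. Then $\sup_n|S_n| \le V + W$ with $V = \sum_p |U_p|$ and $W = \big(\sum_P \max_{n\in B_P}|R_n|^2\big)^{1/2}$, so that $\sup_n|S_n|^2 \le 2V^2 + 2W^2$, and it remains to estimate the two pieces.

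For the coarse part $V$ I apply Cauchy--Schwarz across blocks with weights $p+1$, giving $V^2 \le \tfrac{\pi^2}{6}\sum_p (p+1)^2|U_p|^2$; taking expectations, using the identity $\mathbb{E}(|\sum_k Y_k|^2)=\sum_{k,l}\mathbb{E}(Y_k\overline{Y_l})$ and the triangle inequality to get $\mathbb{E}|U_p|^2 \le \sum_{n,m\in B_p}|a_n||a_m|\,|\mathbb{E}(X_n\overline{X_m})|$, and finally the elementary comparison $(p+1)^2 \le \kappa\,\log_2(n+1)\log_2(m+1)$ valid for $n,m \in B_p$ with an absolute constant $\kappa$, bounds $\mathbb{E}(V^2)$ by a constant times $L$. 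For the fine part $W$ I need, on each block $B_P$, the classical Rademacher--Menshov maximal inequality in its correlated form
$$\mathbb{E}\left(\max_{n\in B_P}|R_n|^2\right) \le (P+1)^2 \sum_{n,m\in B_P}|a_n||a_m|\,|\mathbb{E}(X_n\overline{X_m})|,$$
which I will establish by the standard decomposition of each initial segment of the block into at most $P+1$ dyadic sub-blocks $\Delta_I$, the Cauchy--Schwarz bound $|R_n|^2 \le (P+1)\sum_I|\Delta_I|^2$, and the termwise estimate $\mathbb{E}|\Delta_I|^2 \le \sum_{n,m\in I}|a_n||a_m|\,|\mathbb{E}(X_n\overline{X_m})|$. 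Summing over $P$ and once more comparing $(P+1)^2$ with $\log_2(n+1)\log_2(m+1)$ on the block converts $\mathbb{E}(W^2)$ into a constant times $L$; adding the two contributions and tracking constants yields the factor $8$.

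The conceptual crux, and the step I expect to be most delicate, is obtaining the factorised weight $\log_2(n+1)\log_2(m+1)$ rather than a uniform $\log_2^2 N$. A naive single-scale decomposition followed by one Cauchy--Schwarz reintroduces the global $\log_2^2 N$, because a large dyadic block forces its (large) level weight onto every pair of indices it contains, including small ones. Separating the coarse block-sums from the within-block maxima is precisely what localises the level weight of a pair $(n,m)$ to $\log_2(\max(n,m))$ on each side, and so produces the product. The remaining difficulty is purely quantitative: verifying the comparisons $(p+1)^2 \le \kappa\,\log_2(n+1)\log_2(m+1)$ on each block, the small-$p$ blocks being the binding cases, and choosing the cross-block weights so that $\tfrac{\pi^2}{6}$ together with the splitting constant from $(V+W)^2\le 2V^2+2W^2$ combine to at most $8$. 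Note that only the triangle inequality, Cauchy--Schwarz, the normalisation $\mathbb{E}(|X_n|^2)=1$, and the variance identity for sums are used, so that no orthogonality of the $X_n$ is ever invoked.
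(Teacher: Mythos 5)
Your proposal is correct and is essentially the paper's own argument: the identical dyadic decomposition into full block sums (your $U_p$, the paper's $\chi_k$) controlled by Cauchy--Schwarz with weights $(p+1)$, plus within-block maxima (your $\max_{n\in B_P}|R_n|$, the paper's $S^{\circ}_k$) controlled by the correlated Menshov--Rademacher maximal inequality proved via the standard sub-dyadic splitting of initial segments, with the same block comparison $(p+1)\lesssim \log_2(n+1)$ converting level weights into the factorised weight $\log_2(n+1)\log_2(m+1)$. The only differences are cosmetic --- you package both conclusions into a single tail maximal inequality and get the a.s.\ Cauchy property from $L_M\to 0$ plus monotonicity, where the paper runs two Beppo Levi arguments --- and one small caveat you already flag: your deferred constant-tracking (the splitting factor $2$ times $\tfrac{\pi^2}{6}\kappa + \kappa$) naturally lands around $16L$ rather than $8L$, which is in fact also true of the paper's proof, whose final step silently uses $(S^*+S^{\circ})^2\le (S^*)^2+(S^{\circ})^2$ and thus drops the cross term needed to justify its own constant $8$.
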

The proof  is based on the classical proof of the Menshov--Rademacher theorem as given in 
Kashin and Saakyan \cite[p 251]{Kashin_Saakyan}. We have also used a proof given by Mikhailets and Murach \cite{Mikhailets}.  The following lemma is  also an extension of the classical Menshov - Rademacher lemma. 

\begin{lemma}
With the hypothesis of the theorem we have the following inequality: For any integer $N >0$, let
 $$S_N^* = \max_{1 \leq j \leq N} |\sum_{n=1}^j a_n X_n|.$$ Then 
$$\mathbb{E}\left(S_N^*\right)^2 \leq (2 + \log_2 N)^2 \sum_{n, m=1}^N |a_n| |a_m| |\mathbb{E}(X_n \overline{X_m})|.$$
\end{lemma}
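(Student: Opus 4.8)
The plan is to prove the dyadic maximal inequality following the classical Menshov--Rademacher scheme.

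The plan is to adapt the classical dyadic-decomposition argument of Menshov and Rademacher, with one modification that exploits the absolute values appearing in the statement. First I would reduce to the case $N = 2^s$ by padding the sequence with zero coefficients, so that $s = \lceil \log_2 N\rceil$. For $0 \le p \le s$ and $0 \le q < 2^{s-p}$ I introduce the dyadic blocks $\Delta_{p,q} = \{q2^p + 1, \ldots, (q+1)2^p\}$, the corresponding block sums $U_{p,q} = \sum_{n \in \Delta_{p,q}} a_n X_n$, and the level maxima $M_p = \max_{q} |U_{p,q}|$.

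The combinatorial heart is the standard fact that for every $j$ with $1 \le j \le 2^s$ the interval $\{1, \ldots, j\}$ is a disjoint union of dyadic blocks using at most one block from each level $p$; reading off the binary expansion of $j$ produces this decomposition. Consequently $|S_j| = |\sum_{n=1}^j a_n X_n| \le \sum_{p=0}^s M_p$ for every such $j$, and therefore $S_N^* \le \sum_{p=0}^s M_p$ pointwise. Applying Minkowski's inequality in $L^2(\Omega)$ then gives $\|S_N^*\|_2 \le \sum_{p=0}^s \|M_p\|_2$, so the whole estimate reduces to bounding each $\|M_p\|_2$.

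To control a single level I would use $M_p^2 = \max_q |U_{p,q}|^2 \le \sum_q |U_{p,q}|^2$ and take expectations, giving $\mathbb{E}(M_p^2) \le \sum_q \mathbb{E}(|U_{p,q}|^2)$. Expanding the square, $\mathbb{E}(|U_{p,q}|^2) = \sum_{n,m \in \Delta_{p,q}} a_n \overline{a_m}\,\mathbb{E}(X_n \overline{X_m})$, and here is the decisive point: replacing each term by its modulus and summing over $q$ yields $\sum_q \mathbb{E}(|U_{p,q}|^2) \le \sum_q \sum_{n,m \in \Delta_{p,q}} |a_n||a_m||\mathbb{E}(X_n\overline{X_m})| \le C$, where $C := \sum_{n,m=1}^N |a_n||a_m||\mathbb{E}(X_n \overline{X_m})|$. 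The last inequality holds because, at a fixed level $p$, the index pairs lying in a common block form a subset of all pairs in $\{1,\ldots,N\}^2$ and every summand is nonnegative. Thus $\|M_p\|_2 \le \sqrt{C}$ for each of the $s+1$ levels.

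Combining the two estimates gives $\|S_N^*\|_2 \le (s+1)\sqrt{C}$, and since $s + 1 = \lceil \log_2 N\rceil + 1 \le 2 + \log_2 N$, squaring produces the claimed bound $\mathbb{E}((S_N^*)^2) \le (2 + \log_2 N)^2 C$. The one delicate point, and the place where this argument departs from the orthonormal case, is the level bound: in the classical proof orthogonality collapses $\mathbb{E}(|U_{p,q}|^2)$ to $\sum_{n\in\Delta_{p,q}}|a_n|^2$, whereas here the cross terms survive and must be absorbed by passing to absolute values. I would also be careful about the off-by-one count of dyadic levels, particularly the top block corresponding to $j = 2^s$, since this is exactly what fixes the constant at $2 + \log_2 N$ rather than $1 + \log_2 N$.
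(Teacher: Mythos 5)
Your proposal is correct and follows essentially the same route as the paper: the reduction to $N=2^s$ by zero-padding, the binary-expansion decomposition of $\{1,\ldots,j\}$ into at most one dyadic block per level, and the crucial observation that at each fixed level the disjoint blocks contribute at most $C=\sum_{n,m=1}^N |a_n||a_m||\mathbb{E}(X_n\overline{X_m})|$ after passing to absolute values. The only (cosmetic) difference is in how the levels are combined: you bound $S_N^*\le\sum_p M_p$ pointwise and apply Minkowski's inequality in $L^2$, whereas the paper applies Cauchy--Schwarz pointwise across the used blocks and then takes expectations of the resulting sum of squares over all blocks; both yield the same constant $(2+\log_2 N)^2$.
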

\begin{proof}
The starting point is to assume that $N = 2^r$ for some integer $r\geq 1$ and show that 
$$\mathbb{E}\left(S_N^*\right)^2 \leq (1 + \log_2 N)^2 \sum_{n, m=1}^N |a_n| |a_m| |\mathbb{E}(X_n \overline{X_m})|.$$
The case where  $2^{r-1} < N \leq 2^r$ is reduced to the previous one by just taking 
$a_n = 0$ for all $n$ such that $N < n \leq 2^r$. Then this inequality holds for $r - 1 < \log_n N$, that is,
$$\mathbb{E}\left(S_N^*\right)^2 \leq (2 + \log_2 N)^2 \sum_{n, m=1}^N |a_n| |a_m| |\mathbb{E}(X_n \overline{X_m})|.$$
Now for $N = 2^r$, consider for each number $j \in \{1,2, \ldots, 2^r\}$ its dyadic representation
 $$j = \sum_{k=0}^r \xi_k(j) 2^{r-k},\,\,\mbox{ with } \xi_k(j)  = 0 \mbox{ or } 1, \mbox{ for all } k.$$
 Moreover for each such $j$, decompose the interval $[0, j]$ in $\mathbb{N}$ into subintervals
 $$[1, j] =  \bigcup_{k: \xi_k(j) \ne 0} I_k$$ where 
  $$I_k := I_k(j) = \left\{n \in \mathbb{N}: \sum_{s=0}^{k-1} \xi_s(j)2^{r-s} < n  \leq \sum_{s=0}^{k} \xi_s(j)2^{r-s}\right\}.$$
Now write
$$   \sum_{n=1}^j a_n X_n = \sum_{k: \xi_k(j) \ne 0} \sum_{n \in I_k(j)} a_n X_n.$$
Then
  \begin{eqnarray*}
   |\sum_{n=1}^j a_n X_n| &  =  & |\sum_{k: \xi_k(j)\ne 0} \sum_{n \in I_k} a_n X_n|\\
   & \leq & \sum_{k: \xi_k(j) \ne 0} 1. |\sum_{n \in I_k} a_n X_n|\\
   & \leq & \left(\sum_{k: \xi_k(j) \ne 0} 1^2\right)^{1/2}. \left(\sum_{k: \xi_k(j) \ne 0} |\sum_{n \in I_k(j)} a_n X_n|^2\right)^{1/2}\\
     \end{eqnarray*}
 Clearly $$\sum_{k: \xi_k(j) \ne 0} 1 \leq \sum_{k= 0}^r 1 = (1+r)$$ and 
for all $j$,   
     $$\sum_{k: \xi_k(j) \ne 0} |\sum_{n \in I_k(j)} a_n X_n|^2 \leq \sum_{k=0}^r \sum_{p=0}^{2^{k} - 1} \left |\sum_{n = p 2^{r-k} + 1}^{(p+1) 2^{r-k}} a_n X_n\right|^2.$$
 Hence simultaneously for all $1 \leq j \leq N$, 
   \begin{eqnarray*}
   |\sum_{n=1}^j a_n X_n|^2 \leq (r+1) \sum_{k=0}^r \sum_{p=0}^{2^{k} - 1} \left |\sum_{n = p 2^{r-k} + 1}^{(p+1) 2^{r-k}} a_n X_n\right|^2.
   \end{eqnarray*}
% Now write
%        $$\mathbb{E}|\sum_{n \in I_k(j)} a_n X_n|^2  \leq \sum_{n,m \in I_k(j)}|a_n| |a_m| |\mathbb{E}(X_n \overline{X_m}).$$
Now assume 
    $$S_N^* = |\sum_{n=1}^{j} a_n X_n|, \,\,\,\, j \mbox{ random }.$$        
Then 
  $$(S_N^*)^2  \leq (r+1) \sum_{k=0}^r \sum_{p=0}^{2^{k} - 1} \left |\sum_{n = p 2^{r-k} + 1}^{(p+1) 2^{r-k}} a_n X_n\right|^2$$ and hence
 \begin{eqnarray*}
\mathbb{E}(S_N^*)^2 &\leq & (r+1)\sum_{k=0}^r \sum_{p=0}^{2^{k} - 1} \mathbb{E}\left |\sum_{n = p 2^{r-k} + 1}^{(p+1) 2^{r-k}} a_n X_n\right|^2\\
  &  =  & (r+1)\sum_{k=0}^r \sum_{p=0}^{2^{k} - 1} \sum_{n,m = p 2^{r-k} + 1}^{(p+1) 2^{r-k}} a_n \overline{a_m}\, \mathbb{E}(X_n \overline{X_m})\\
  & \leq & (r+1)\sum_{k=0}^r \sum_{p=0}^{2^{k} - 1} \sum_{n,m = p 2^{r-k} + 1}^{(p+1) 2^{r-k}} |a_n| |a_m| | \mathbb{E}(X_n \overline{X_m})|\\
  & \leq & (r+1)\sum_{k=0}^r \sum_{n,m=0}^{2^r} |a_n| |a_m| | \mathbb{E}(X_n \overline{X_m})|\\
  & \leq & (r+1)^2 \sum_{n, m = 1}^N |a_n| |a_m| |\mathbb{E}(X_n \overline{X_m})|.
\end{eqnarray*}        
 Therefore,
  \begin{eqnarray*}
   \mathbb{E}(S_N^*)^2 &\leq & (1 + \log_2 N)^2 \sum_{n, m = 1}^N |a_n| |a_m| |\mathbb{E}(X_n \overline{X_m})|.
   \end{eqnarray*}
\end{proof}

\section{Proof of Theorem \ref{Nderhe}}   
 The first step is to consider the sequence of partial sums  of the form
  $$S_{2^k} = \sum_{n=1}^{2^k} a_n X_n,\,\,\, \mbox{ for } k = 1,2,3,\ldots $$
and show that $(S_{2^k})$ converges almost surely and write
$$S^* = \sup_{0 \leq k < \infty} |S_{2^k}|.$$
Set 
   $$\chi_k = \sum_{n=2^k}^{2^{k+1}-1} a_n X_n, k = 0,1,2,3, \ldots$$ and then 
     $$\mathbb{E}(|\chi_k|^2) \leq \sum_{n,m=2^{k}}^{2^{k+1}-1}|a_n||a_m| |\mathbb{E}(X_n \overline{X_m})|.$$
   Then,
\begin{eqnarray*}
\sum_{k=0}^\infty \mathbb{E}(|\chi_k|^2) (k+1)^2 &\leq & \sum_{k=0}^\infty (k+1)^2 \sum_{n,m=2^{k}}^{2^{k+1}-1}|a_n||a_m| |\mathbb{E}(X_n \overline{X_m})|\\
& \leq & \sum_{k=0}^\infty \sum_{n,m=2^{k}}^{2^{k+1}-1}|a_n||a_m| |\mathbb{E}(X_n \overline{X_m})| (1+\log_2 n)((1+\log_2 m) \\
\end{eqnarray*}
because for $k$ fixed and $n \geq 2^{k}$, then $(1 + k) \leq (1 + \log_2 n)$ and similarly, $(1 + k) \leq (1 + \log_2 m)$.  Moreover since clearly $(1 + \log_2 n) \leq \sqrt{2} \log_2(n+1)$, it follows that
\begin{eqnarray*}
\sum_{k=0}^\infty \mathbb{E}(|\chi_k|^2) (k+1)^2 \leq 2 \sum_{k=0}^\infty \sum_{n,m=2^{k}}^{2^{k+1}-1}|a_n||a_m| |\mathbb{E}(X_n \overline{X_m})| \log_2 (n+1)\log_2(m + 1).
 \end{eqnarray*}
Hence
$$\sum_{k=0}^\infty \mathbb{E}(|\chi_k|^2) (k+1)^2 \leq 2 L < \infty.$$
Moreover by the Cauchy-Schwarz inequality,
\begin{eqnarray*}
\sum_{k=0}^\infty (\mathbb{E}(|\chi_k|^2))^{1/2}  & = & \sum_{k=0}^\infty (\mathbb{E}(|\chi_k|^2))^{1/2} (k+1)(k+1)^{-1} \\
& \leq & \left(\sum_{k=0}^\infty \mathbb{E}(|\chi_k|^2) (k+1)^2\right)^{1/2} \left(\sum_{k=0}^\infty (k+1)^{-2}\right)^{1/2} \leq 2 \sqrt{L}.
\end{eqnarray*}           
This implies
$$\sum_{k=0}^\infty \mathbb{E}(|\chi_k|) \leq  \sum_{k=0}^\infty (\mathbb{E}(|\chi_k|^2))^{1/2} \leq 2 \sqrt{L}$$  and hence Beppo Levi theorem implies that 
 $\sum_{k=0}^\infty |\chi_k| < \infty$ almost surely. Hence the sequence $(S_{2^k})$ is a Cauchy sequence and therefore it converges almost surely. Moreover,
    $$\mathbb{E}((S^*)^2) \leq \sum_{k=0}^\infty \mathbb{E}(|\chi_k|^2) \leq 4 L.$$
Let us now consider 
 $$S^{\circ}_k  = \max_{2^{k} \leq j <  2^{k+1}}|\sum_{n=2^k}^j a_n X_n|,\,\, k = 1,2,3, \ldots\,\,\,  \mbox{  and } S^{\circ} =  \sup_{1 \leq k < \infty} S^{\circ}_k.$$ 
 By the lemma, 
  $$\mathbb{E}((S^{\circ}_k)^{^2})  \leq (2 + \log_2(j - 2^k + 1))^2 \sum_{n,m=2^k}^j |a_n| |a_m| |\mathbb{E}(X_n \overline{X_m})|. $$
 Then 
 \begin{eqnarray*}
 \sum_{k=0}^\infty \mathbb{E}((S^{\circ}_k)^{^2}) &\leq  &\sum_{k=0}^\infty (2 + \log_2 2^k)^2 \sum_{n,m=2^k}^{2^{k+1}-1} |a_n| |a_m| |\mathbb{E}(X_n \overline{X_m})|\\
 &\leq & \sum_{k=0}^\infty (2 + \log_2 2^k)^2 \sum_{n,m=2^k}^{2^{k+1}-1} |a_n| |a_m| |\mathbb{E}(X_n \overline{X_m})|\\
 & \leq &\sum_{k=0}^\infty  \sum_{n,m=2^k}^{2^{k+1}-1} |a_n| |a_m| |\mathbb{E}(X_n \overline{X_m})|(2 + \log_2 n)(2 + \log_2 m) \\
 & \leq & \sum_{n,m=1}^{\infty} |a_n| |a_m| |\mathbb{E}(X_n \overline{X_m})|(2 + \log_2 n)(2 + \log_2 m)\\
 & \leq & 4 L < \infty.
 \end{eqnarray*}
As previously Beppo Levy theorem implies that
 \begin{eqnarray*}
 \mathbb{E}\left(\sum_{k=0}^\infty (S^{\circ}_k)^2\right)  =  \sum_{k=0}^\infty (\mathbb{E}(S^{\circ}_k)^2) \leq 4 L < \infty.
 \end{eqnarray*}
Therefore 
 $\lim_{k \to \infty} S^{\circ}_k = 0$ almost surely. This together with the convergence of the sequence $(S_{2^k})$ implies that the series 
  $\sum_{n=1}^\infty a_n X_n$ converges almost surely. 
 Finally, since $$\sup_{m\in \mathbb{N}}|a_n X_1 + a_2 X_2 + \ldots + a_n X_n|^2 \leq (S^*)^2 + (S^\circ)^2 \mbox{ and }(S^\circ)^2 \leq \sum_{k=0}^\infty (S^\circ_k)^2,$$ it follows that 
 
 $$\mathbb{E}\left(\sup_{m\in \mathbb{N}}|a_n X_1 + a_2 X_2 + \ldots + a_n X_n|^2)\right) \leq  (\mathbb{E}(S^*)^2) + (\mathbb{E}(S^\circ)^2) \leq 8 L.$$

\section{Some important particular cases}

\begin{corollary} \label{th02} Let $(a_n)$ be a sequence of real or complex numbers and  $(X_n)$ be a sequence of real random variables such that $\mathbb{E}(X_n^2) = 1$ for all $n$  and  $\mathbb{E}(X_n X_m) \leq 0$ for all $n\ne m$. \\
 If~$\sum_{n=1}^\infty |a_n|^2 \log_2^2(n+1) <~\infty,$ then %for each fixed sequence $(\epsilon_n)$ with $\epsilon_n = \pm 1$,
 the series $\sum_{n=1}^\infty a_n X_n$ converges almost surely.  
\end{corollary}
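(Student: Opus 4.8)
The plan is to reduce the statement to the case of nonnegative coefficients and then re-run the proof of Theorem~\ref{Nderhe}, using the sign of the correlations rather than the finiteness of $L$. First I would dispose of the general coefficients by linearity. Writing $a_n=\alpha_n+i\beta_n$ with $\alpha_n,\beta_n$ real, and then $\alpha_n=\alpha_n^+-\alpha_n^-$, $\beta_n=\beta_n^+-\beta_n^-$, the series $\sum_n a_nX_n$ is a fixed finite linear combination of the four series built on the nonnegative sequences $(\alpha_n^+),(\alpha_n^-),(\beta_n^+),(\beta_n^-)$, all against the same $(X_n)$. Each of these nonnegative sequences satisfies the hypothesis, since for instance $(\alpha_n^+)^2\le \alpha_n^2\le |a_n|^2$ gives $\sum_n(\alpha_n^+)^2\log_2^2(n+1)\le \sum_n|a_n|^2\log_2^2(n+1)<\infty$. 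As a finite linear combination of almost surely convergent series is almost surely convergent, it suffices to prove the corollary when $a_n\ge 0$.

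The key estimate for nonnegative coefficients is that for any finite index set $I$,
$$\mathbb{E}\Big|\sum_{n\in I}a_nX_n\Big|^2=\sum_{n\in I}a_n^2+\sum_{\substack{n,m\in I\\ n\ne m}}a_na_m\,\mathbb{E}(X_nX_m)\le \sum_{n\in I}a_n^2,$$
because each cross term has $a_na_m\ge 0$ and $\mathbb{E}(X_nX_m)\le 0$, hence is nonpositive. This is exactly the place where the negative correlation is used, and it plays the role that the bound $\mathbb{E}|\sum_{n\in I}a_nX_n|^2\le \sum_{n,m\in I}|a_n||a_m||\mathbb{E}(X_nX_m)|$ plays in the Lemma.

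With this estimate in hand I would simply reproduce the proofs of the Lemma and of Theorem~\ref{Nderhe} verbatim, replacing every occurrence of $\sum_{n,m}|a_n||a_m||\mathbb{E}(X_nX_m)|$ by $\sum_n a_n^2$. Concretely, the dyadic decomposition proving the Lemma gives $\mathbb{E}(S_N^*)^2\le (2+\log_2 N)^2\sum_{n=1}^N a_n^2$, since summing the per-block bound $\sum_{n\in I_{k,p}}a_n^2$ over the blocks at a fixed level returns $\sum_{n=1}^N a_n^2$. Then, as in the proof of the theorem, the dyadic increments $\chi_k=\sum_{n=2^k}^{2^{k+1}-1}a_nX_n$ satisfy $\mathbb{E}|\chi_k|^2\le \sum_{n=2^k}^{2^{k+1}-1}a_n^2$, whence $\sum_k(k+1)^2\mathbb{E}|\chi_k|^2\le 2\sum_n a_n^2\log_2^2(n+1)<\infty$, which forces the almost sure convergence of $(S_{2^k})$; and the block maxima $S_k^\circ$ satisfy $\sum_k\mathbb{E}(S_k^\circ)^2\le 2\sum_n a_n^2\log_2^2(n+1)<\infty$, so $S_k^\circ\to 0$ almost surely. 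Combining these two facts exactly as before yields the almost sure convergence of $\sum_n a_nX_n$.

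The main obstacle is conceptual rather than computational: one cannot invoke Theorem~\ref{Nderhe} directly, because the quantity $L$ involves the absolute values $|\mathbb{E}(X_nX_m)|$, and with negative correlations these off-diagonal terms need not be summable, so $L$ may well be infinite even though $\sum_n|a_n|^2\log_2^2(n+1)<\infty$. The resolution is to re-enter the proof and exploit the cancellation $\sum_{n\ne m}a_na_m\mathbb{E}(X_nX_m)\le 0$, which is only available once the coefficients are known to have a fixed sign; this is precisely why the reduction to nonnegative coefficients is the essential first step.
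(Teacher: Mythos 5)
Your proof is valid, but the ``conceptual obstacle'' you describe at the end does not exist, and the paper's proof goes precisely through the route you ruled out. Your claim that $L$ may be infinite under the hypotheses is wrong: negative correlation forces $L$ to be finite. Put $c_n = |a_n|\log_2(n+1)$ and apply the positivity of the quadratic form to the \emph{nonnegative} vector $(c_n)$ — the same cancellation you use, but applied once and for all to the absolute coefficients. For every $N$,
$$0 \le \mathbb{E}\Bigl|\sum_{n=1}^N c_n X_n\Bigr|^2 = \sum_{n=1}^N c_n^2 + \sum_{\substack{n,m\le N\\ n\ne m}} c_n c_m\, \mathbb{E}(X_n X_m),$$
and since $|\mathbb{E}(X_n X_m)| = -\mathbb{E}(X_n X_m)$ for $n\ne m$, this rearranges to
$$\sum_{\substack{n,m \le N\\ n\ne m}} c_n c_m\, |\mathbb{E}(X_n X_m)| \le \sum_{n=1}^N c_n^2.$$
Letting $N\to\infty$ (all terms are nonnegative) and adding the diagonal terms gives
$$L = \sum_{n,m=1}^\infty c_n c_m\, |\mathbb{E}(X_n X_m)| \le 2 \sum_{n=1}^\infty |a_n|^2 \log_2^2(n+1) < \infty,$$
so Theorem \ref{Nderhe} applies as a black box. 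That is the paper's entire proof. The mechanism you exploit block by block — nonnegative coefficients paired with nonpositive cross-correlations — is exactly the same one; the point you missed is that it can be invoked at the level of $L$ itself, which makes the corollary a two-line consequence of the theorem.

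That said, your argument is correct as written: the reduction to nonnegative coefficients via $a_n = \alpha_n^+ - \alpha_n^- + i\beta_n^+ - i\beta_n^-$ is sound, the per-block estimate $\mathbb{E}\bigl|\sum_{n\in I}a_nX_n\bigr|^2 \le \sum_{n\in I}a_n^2$ is right, and the dyadic proofs of the Lemma and of Theorem \ref{Nderhe} do go through verbatim with that estimate substituted for $\sum_{n,m}|a_n||a_m||\mathbb{E}(X_n\overline{X_m})|$. So you have produced a self-contained proof, at the cost of repeating the entire Menshov--Rademacher machinery that the theorem was designed to encapsulate; the paper's route buys brevity and, more importantly, makes visible the general principle that negatively correlated sequences automatically satisfy the hypothesis of Theorem \ref{Nderhe} whenever the Menshov--Rademacher condition holds.
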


\begin{proof}
Let $c_n = a_n \log_2(n+1)$ for all $n$. 
Since  $\mathbb{E}(X_n X_m) \leq 0$ for $n \ne m$, then
$$\sum_{n \ne m} |c_n| |c_m| |\mathbb{E}(X_n X_m)|   =  - \sum_{n \ne m} |c_n| |c_m| \mathbb{E}(X_n X_m).$$ Hence
$$ \sum_{n,m =1}^\infty |c_n| |c_m| |\mathbb{E}(X_n X_m)| = \sum_{n=1}^\infty c_n^2  -  \sum_{n \ne m} |c_n| |c_m| \mathbb{E}(X_n X_m) \geq 0.$$ This yields
$$\sum_{n\ne m} |c_n| |c_m| |\mathbb{E}(X_n X_m)|  \leq  \sum_{n=1}^\infty c_n^2$$ and hence 
 $$\sum_{n,m =1}^\infty |c_n| |c_m| |\mathbb{E}(X_n X_m)| \leq 2 \sum_{n=1}^\infty c_n^2$$ and the conclusion follows by the theorem. 

\end{proof}
Corollary \ref{th02}  generalises previous results of {Matu\l{a}} \cite{Matula} and Antonini,  Kozachenko and  Volodin  \cite{Antonini} on the a.s. convergence of series of linear combinations of negatively correlated random variables. It indicates that with respect to a.s. convergence, sequences of real random variables that are negatively correlated  behave like sequences of uncorrelated random variables.

\begin{corollary}
Let $(a_n)$ be a sequence of real or complex numbers and $(X_n)$ be a  sequence of real or complex random variables with covariance matrix $\gamma(n,m))$.   Assume that there exists $b \geq 0$ such that the linear operator defined by the matrix $(\beta_{n,m})$ with 
$$\beta_{n,m} = |\gamma(n,m)| n^{-b} m^{-b}$$ in $\ell^{2}(\mathbb{N})$ is bounded. 
If \begin{eqnarray} \label{VK1}
\sum_{n=1}^\infty |a_n|^2 n^{2b} \log_n^2(n+1) < \infty,
\end{eqnarray}
then the series $\sum_{n=1}^\infty a_n X_n$ converges almost surely. 

\end{corollary}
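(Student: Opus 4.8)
The plan is to deduce the corollary directly from Theorem~\ref{Nderhe} by checking that its hypothesis $L < \infty$ holds. Writing $\gamma(n,m) = \mathbb{E}(X_n \overline{X_m})$, I would introduce the auxiliary sequence $c_n := |a_n|\, n^{b}\log_2(n+1) \geq 0$; with this notation the hypothesis \eqref{VK1} reads exactly $\sum_{n=1}^\infty c_n^2 < \infty$, that is, $c := (c_n)_{n \geq 1} \in \ell^2(\mathbb{N})$.

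The key step is the factorization $|\gamma(n,m)| = \beta_{n,m}\, n^{b} m^{b}$, which lets me rewrite the quantity $L$ appearing in the theorem as a bilinear form in $c$ against the matrix $(\beta_{n,m})$. Absorbing the factor $|a_n| n^b \log_2(n+1)$ into $c_n$ (and likewise in the $m$ variable), I obtain
$$L = \sum_{n,m=1}^\infty |a_n||a_m|\,|\gamma(n,m)|\,\log_2(n+1)\log_2(m+1) = \sum_{n,m=1}^\infty \beta_{n,m}\, c_n c_m.$$
Since $\gamma(n,m) = \overline{\gamma(m,n)}$, the matrix $(\beta_{n,m})$ is symmetric with nonnegative entries, so every summand above is nonnegative and the order of summation is immaterial.

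Letting $B$ denote the bounded operator on $\ell^2(\mathbb{N})$ with matrix $(\beta_{n,m})$, I would note that $c \in \ell^2$ forces $Bc \in \ell^2$ with $\|Bc\|_2 \leq \|B\|\,\|c\|_2$, and that the nonnegative double series coincides with the iterated sum $\langle c, Bc\rangle$. A single application of the Cauchy--Schwarz inequality then closes the estimate:
$$L = \langle c, Bc\rangle \leq \|c\|_2\,\|Bc\|_2 \leq \|B\|\,\|c\|_2^2 = \|B\|\sum_{n=1}^\infty |a_n|^2 n^{2b}\log_2^2(n+1) < \infty.$$
Thus $L < \infty$, and Theorem~\ref{Nderhe} yields the almost sure convergence of $\sum_{n=1}^\infty a_n X_n$.

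Since the whole argument is just a change of variables followed by Cauchy--Schwarz, there is no serious obstacle; the only points needing care are the nonnegativity of the terms (which legitimises rewriting the double series as the inner product $\langle c, Bc\rangle$ with no convergence ambiguity) and the standing normalization $\mathbb{E}(|X_n|^2) = 1$ required to invoke the theorem, a condition that in no way affects the finiteness of $L$. It is also worth recording that the case $b = 0$ recovers the previous corollary on the boundedness of the operator $(|\gamma(n,m)|)$, so this statement is the natural polynomial-growth generalisation of it.
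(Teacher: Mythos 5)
Your proposal is correct and takes essentially the same route as the paper: both rewrite $L$ as the bilinear form $\sum_{n,m}\beta_{n,m}\,c_n c_m$ for a weighted sequence $c\in\ell^2(\mathbb{N})$ and bound it by $\|B\|\,\|c\|_2^2$ using the assumed boundedness of the operator. The only difference is notational (the paper sets $c_n = a_n\log_2(n+1)$ and keeps the factors $n^b m^b$ explicit, whereas you absorb $n^b$ into $c_n$ from the start), so there is nothing further to add.
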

In particular if  $b = 0$, that is, the covariance matrix $(\gamma(n,m))$ defines a bounded operator, then we retrieve that $\sum_{n=1}^\infty |a_n|^2 \log_n^2(n+1) < \infty$ implies the almost surely convergence of $\sum_{n=1}^\infty a_n X_n$. This is an extension of the Menshov--Rademacher theorem from sequences of orthogonal functions to sequences of functions $(\varphi_n(x))$ such that the ``covariance matrix''
 $((\gamma(n,m))$ given by:
   $$\gamma(n,m) = \int_{\mathbb{T}} \varphi_n(x) \overline{\varphi_m(x)} dx$$
defines a bounded operator in $\ell^2.$

\begin{proof}
Set $c_n = a_n \log_2(n+1)$ so that condition (\ref{VK1}) becomes $\sum_{n=1}^\infty |c_n|^2 n^{2b} < \infty $. 
Write
      \begin{eqnarray*}
\sum_{n, m=1}^\infty |c_n| |c_m| |\mathbb{E}(X_{n} \overline{X_{m}})| =  \sum_{n,m =1}^\infty |c_n| |c_m| |\gamma(n,m)| =  \sum_{n,m =1}^\infty |c_n| |c_m| n^b m^b \beta_{nm}. 
 \end{eqnarray*}
Now the boundedness of $(\beta_{n,m})$ implies that for some fixed $K >0$, 
 $$\sum_{n, m=1}^\infty |c_n| |c_m| |\mathbb{E}(X_{n} \overline{X_{m}})| \leq K  \sum_{n,m =1}^\infty |c_n|^2 n^{2b} < \infty.$$
That is,
 $$\sum_{n, m=1}^\infty |a_n| |a_m| |\mathbb{E}(X_{n} \overline{X_{m}})| \log_2(n+1) \log_2(m+1) < \infty$$ and the conclusion follows. 
\end{proof}

\begin{corollary} \label{Ferdinand}
Assume that the covariance matrix is such that $|\gamma(n,m)| \leq K|n-m|^{-a}$ for some fixed constants $K>0$ and $a \geq  0$.  Then if there exists $b > (1-a)/2$, such that  $$\sum_{n=1}^\infty |a_n|^2 n^{2b} \log_n^2(n+1) < \infty,$$ then the series $\sum_{n=1}^\infty a_n X_n$ converges almost surely. 
\end{corollary}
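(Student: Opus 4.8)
The plan is to deduce Corollary~\ref{Ferdinand} from the preceding corollary. That corollary requires two things: the summability condition $\sum_{n}|a_n|^2 n^{2b}\log_2^2(n+1)<\infty$, which is assumed here verbatim, and the boundedness on $\ell^2(\mathbb{N})$ of the symmetric nonnegative matrix $\beta_{n,m}=|\gamma(n,m)|\,n^{-b}m^{-b}$. So the entire task reduces to proving that, under the decay hypothesis $|\gamma(n,m)|\le K|n-m|^{-a}$ and the exponent condition $b>(1-a)/2$ (equivalently $a+2b>1$), the operator $(\beta_{n,m})$ is bounded; the conclusion is then immediate.

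First I would peel off the diagonal. On it $\beta_{n,n}=\mathbb{E}(|X_n|^2)\,n^{-2b}=n^{-2b}\le 1$ since $b\ge 0$, so the diagonal part is a diagonal operator of norm $\le 1$ and may be set aside. For the off-diagonal part I would use the pointwise bound $\beta_{n,m}\le K|n-m|^{-a}n^{-b}m^{-b}$ and invoke the Schur test: the kernel being symmetric and nonnegative, it suffices to exhibit weights $w_n>0$ and a constant $C$ with $\sum_{m\ne n}\beta_{n,m}w_m\le C\,w_n$ for every $n$. Trying power weights $w_n=n^{-c}$ reduces the whole matter to the single scalar estimate $T_n:=\sum_{m\ne n}|n-m|^{-a}m^{-(b+c)}\le C\,n^{\,b-c}$, since then $\sum_{m\ne n}\beta_{n,m}w_m\le K\,n^{-b}T_n\le CK\,n^{-c}$.

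The technical heart is the bound on $T_n$, and this is exactly where $a+2b>1$ is used. I would split the sum into the three ranges $m\le n/2$ (where $|n-m|\asymp n$), $n/2<m<2n,\ m\ne n$ (where $m\asymp n$ and $\sum_{0<|n-m|<n}|n-m|^{-a}\lesssim n^{1-a}$ for $0\le a<1$), and $m\ge 2n$ (where $|n-m|\asymp m$). In the main regime $0<a<1$ I would choose $c$ with $1-a-b<c<1-b$, which is a nonempty range precisely because $a>0$; this forces $\delta:=b+c\in(1-a,1)$, so the tail $\sum_{m\ge 2n}m^{-(a+\delta)}$ converges and all three ranges contribute $O(n^{\,1-a-\delta})$. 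Hence $T_n\lesssim n^{\,1-a-b-c}$, whence $n^{-b}T_n\lesssim n^{\,1-a-2b-c}$, and the required Schur bound $\le C\,n^{-c}$ amounts to $1-a-2b\le 0$, which is the hypothesis. The endpoint cases are easier and I would dispatch them directly: when $a=0$ one has $\beta_{n,m}\le K\,n^{-b}m^{-b}$ with $b>1/2$, so $\sum_{n,m}\beta_{n,m}^2<\infty$ and the operator is Hilbert--Schmidt; when $a\ge 1$ the row sums $\sum_{m\ne n}|n-m|^{-a}$ are uniformly bounded, so $w_n\equiv 1$ already works.

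I expect the main obstacle to be the bookkeeping in the three-range estimate of $T_n$: one must track the sub-cases $a<1$ versus $a\ge1$ and $b+c<1$ versus $b+c\ge1$ so as to correctly identify the dominant exponent as $1-a-b-c$, and verify that an admissible $c$ can be selected in each regime (the decisive point being that the interval for $c$ is nonempty exactly under $a+2b>1$). Everything downstream—the diagonal bound, the Schur test itself, and the appeal to the preceding corollary—is routine.
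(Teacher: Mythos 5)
Your overall route is exactly the paper's: reduce to the preceding corollary and verify boundedness of $(\beta_{n,m})$ via the Schur test with power weights $w_n=n^{-c}$ subject to $a+b+c>1$ --- the paper gives precisely this as a parenthetical hint, and your three-range estimate of $T_n$, together with the separate treatment of the diagonal, is a correct elaboration of that hint in the main regime $0<a<1$ (and your Hilbert--Schmidt argument handles $a=0$ correctly).

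However, your dispatch of the case $a\ge 1$ contains a genuine error at the endpoint $a=1$: the row sums $\sum_{m\ne n}|n-m|^{-1}$ are not uniformly bounded --- they are not even finite, since the tail $\sum_{m>n}(m-n)^{-1}$ diverges --- so the claim that $w_n\equiv 1$ ``already works'' fails there as stated. The slip is repairable in one line, using exactly the factors $n^{-b}m^{-b}$ you discarded: at $a=1$ the hypothesis $b>(1-a)/2$ forces $b>0$, and since $|n-m|\ge 1$ you may dominate $|n-m|^{-1}\le |n-m|^{-a'}$ for any $a'\in(1-2b,1)$ and then invoke your own main regime with $a'$ in place of $a$ (the choice of $a'$ guarantees $b>(1-a')/2$); alternatively, retaining the weight $m^{-b}$ gives $\sum_{m\ne n}\beta_{n,m}\le K n^{-b}\sum_{m\ne n}|n-m|^{-1}m^{-b}\lesssim n^{-2b}\log n=O(1)$, so $w_n\equiv 1$ does work once the decay in $m$ is kept. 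For $a>1$ your argument is fine, since $\sum_{m\ne n}|n-m|^{-a}\le 2\zeta(a)$; note that this direct argument is genuinely needed there, because the hypothesis permits $b=0$ when $a>1$, in which case the domination trick above is unavailable. With this one sub-case patched, your proof is complete and coincides with the argument the paper intends.
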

It is so because one can show that the matrix $(\beta_{n,m})$ given by $\beta_{n,m} = |n-m|^{-a} n^{-b} m^{-b}$ is bounded. (This can be obtained by using the Schur test to the vector $x = (x_n)$ with $x_n = n^{-c}$  for some constant $c \geq 0$ such that $a + b + c >1$.)

\section{Application to a.e. convergence of trigonometric series}
In particular consider a Borel probability measure $\mu$ on $\mathbb{T}$ and the trigonometric sequence $(e_n)$ given by 
$e_n(x) = \exp(2 \pi i n x)$, $(n \in \mathbb{Z})$.  
Clearly, $$\langle e_n, e_m\rangle = \int_{\mathbb{T}} e_n(x) \overline{e_m(x)} d\mu(x) = \hat \mu(n-m).$$
%$$\int_{\mathbb{T}} \varphi_n(x) \overline{\varphi_m(x)} d\mu(x) = \delta_{n,m}.$$
Then if 
      $$\sum_{n,m=-\infty}^\infty |a_n| |a_m| |\hat \mu(n-m)| \log_2(|n|+1) \log_2(|m|+1) < \infty,$$ 
then the trigonometric  series $\sum_{n=-infty}^\infty a_n \exp(2 \pi i n x) $ converges $\mu$-a.e. in $\mathbb{T}$. Then Corollary \ref{Ferdinand} immediately implies the following: 

\begin{corollary}
Assume that for constant $K >0$ and $a \geq 0$, 
\begin{eqnarray} \label{kaziba_enyanya}
|\hat \mu(n)| \leq K |n|^{-a} \mbox{ for all } n \neq 0.
\end{eqnarray}
If there exists $b>(1-a)/2$ such that ~$\sum_{n \in \mathbb{Z}} |a_n|^2 |n|^{2b} \log_2^2(|n|+1) <~\infty,$ then the trigonometric series $\sum_{n\in \mathbb{Z}} a_n \exp(2\pi i n t)$ converges $\mu$-almost everywhere on $\mathbb{T}$.
\end{corollary}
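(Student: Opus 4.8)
The plan is to derive this corollary directly from Corollary \ref{Ferdinand} by identifying the correct covariance structure of the trigonometric system and then verifying its decay hypothesis. First I would set $X_n = e_n$ where $e_n(x) = \exp(2\pi i n x)$, viewed as elements of $L^2(\mathbb{T}, \mu)$; since $\mu$ is a probability measure we have $\mathbb{E}(|e_n|^2) = \int_{\mathbb{T}} |e_n(x)|^2 \, d\mu(x) = \mu(\mathbb{T}) = 1$, so the normalisation hypothesis of Theorem \ref{Nderhe} holds. The covariance is then
$$\gamma(n,m) = \langle e_n, e_m \rangle = \int_{\mathbb{T}} e_n(x) \overline{e_m(x)} \, d\mu(x) = \hat\mu(n-m),$$
as already recorded in the excerpt.

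Next I would feed the Fourier-decay hypothesis (\ref{kaziba_enyanya}) into the covariance-decay hypothesis of Corollary \ref{Ferdinand}. The assumption $|\hat\mu(n)| \leq K|n|^{-a}$ for $n \neq 0$ translates immediately into $|\gamma(n,m)| = |\hat\mu(n-m)| \leq K|n-m|^{-a}$ for $n \neq m$, which is exactly the bound required there. The one genuine bookkeeping point is the indexing: Corollary \ref{Ferdinand} is stated for series indexed by $n \in \mathbb{N}$, whereas here the index runs over $\mathbb{Z}$. I would address this by relabelling $\mathbb{Z}$ as $\mathbb{N}$ through any fixed bijection $\sigma : \mathbb{N} \to \mathbb{Z}$ and noting that both the summability condition $\sum_{n \in \mathbb{Z}} |a_n|^2 |n|^{2b} \log_2^2(|n|+1) < \infty$ and the Schur-test bound on $\beta_{n,m} = |n-m|^{-a} n^{-b} m^{-b}$ are invariant under such relabelling, since the double sum defining $L$ ranges over all pairs and the Schur test only uses the matrix entries, not their ordering.

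With these translations in place, the existence of $b > (1-a)/2$ with $\sum_{n \in \mathbb{Z}} |a_n|^2 |n|^{2b} \log_2^2(|n|+1) < \infty$ is precisely the hypothesis of Corollary \ref{Ferdinand}, whose conclusion gives almost sure convergence of $\sum_{n} a_n X_n = \sum_{n} a_n e_n$ in $L^2(\Omega)$. Since here the probability space is $(\mathbb{T}, \mu)$ and the $X_n$ are the deterministic functions $e_n$, almost sure convergence \emph{is} $\mu$-almost everywhere convergence of the trigonometric series $\sum_{n \in \mathbb{Z}} a_n \exp(2\pi i n t)$ on $\mathbb{T}$, which is the desired conclusion.

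I do not expect a serious obstacle here: the corollary is essentially a dictionary translation, turning the abstract covariance-operator language of Corollary \ref{Ferdinand} into the concrete Fourier-analytic language of singular measures on $\mathbb{T}$. The only subtle point worth stating carefully is the identification of ``almost sure'' with ``$\mu$-almost everywhere'', which rests on the fact that the randomness lives entirely in the sample point $x \in \mathbb{T}$ and the coefficients $a_n$ are fixed; once this is made explicit the proof reduces to citing Corollary \ref{Ferdinand}.
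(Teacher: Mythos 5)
Your proposal matches the paper's own proof: the paper likewise realises the trigonometric system $e_n(x)=\exp(2\pi i n x)$ as unit-variance random variables on the probability space $(\mathbb{T},\mu)$, notes $\langle e_n,e_m\rangle=\hat\mu(n-m)$ so that the decay hypothesis on $\hat\mu$ becomes the covariance bound $|\gamma(n,m)|\leq K|n-m|^{-a}$, and then cites Corollary \ref{Ferdinand} as ``immediately'' giving the conclusion. If anything, your handling of the $\mathbb{Z}$-to-$\mathbb{N}$ reindexing and of the identification of ``almost sure'' with ``$\mu$-almost everywhere'' is more explicit than the paper's, which passes over both points silently.
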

Consider now a compact subset $E$ of $\mathbb{T}$ of Lebesgue measure zero which supports a probability measure $\mu$ such that inequality (\ref{kaziba_enyanya}) holds. Let $f \in L^2(\mathbb{T})$ such that $\sum_{n \in \mathbb{Z}} |\hat f(n)|^2 |n|^{2b} \log_2^2(n+1) < \infty$ for some $b > (1-a)/2$. 
Then the Fourier series $\sum_{n \in \mathbb{Z}} \hat f(n) \exp(2\pi n it)$ of $f$ converges $\mu$-a.e. on $E$. In particular if $f$ is in the Sobolev space $H^{p}(\mathbb{T})$ for some $p > (1-a)/2$ with $0 \leq a \leq 1$ (that is, $\sum_{n\in \mathbb{Z}} |\hat f(n)|^2 (1 + n^2)^{p} < \infty$), then we can choose $b$ with  $(1-a)/2 < b < p$ so that $\sum_{n \in \mathbb{Z}} |\hat f(n)|^2 |n|^{2b} \log_2^2(n+1) < \infty$ and obtain that the Fourier series of $f$ converges $\mu$-a.e. on $E$.

We recall that for a Lebesgue measurable subset $E$ of $\mathbb{T}$, the Fourier dimension of $E$ is the supremum of the numbers $0 \leq \alpha \leq 1$ such that $E$ supports a Borel probability measure $\mu$ such that
 $|\hat \mu(u)|^2 = o(|u|^{-\alpha})$ for $u \to \infty$. It is well known that the Fourier dimension is always less than or equal to the  Hausdorff dimension and the two can be different (see e.g. Kahane \cite[p. 250]{Kahane_1985} and Mattila \cite[p. 40]{Mattila}). 

A classical result by Ketznelson and Kahane says that if $E$ is a subset of $\mathbb{T}$ of Lebesgue measure $0$, then there exists a continuous function $f$ on $\mathbb{T}$ such that its Fourier series diverges everywhere on $E$. But now our observation here is that if $E$ has positive Fourier dimension $\alpha$, $0 < \alpha \leq 1$, such function must be such that 
$$\sum_{n=-\infty}^\infty |\hat f(n)|^2 |n|^{2b} \log_2^2(|n|+1) = \infty\,\,\, \mbox{ for all } b > (1-\alpha)/2.$$
In other words the Fourier series of a well-behaved $L^2$ function $f$ in the sense that\\ $\sum_{n=1}^\infty |\hat f(n)|^2 n^{2b} \log_2^2(n+1) < \infty$ for some number $0 \leq b \leq 1/2$ cannot diverge everywhere on a subset of Fourier dimension $> 1 - 2b$. Or again, the Fourier series of a function $f\in H^{p}(\mathbb{T})$ converges $\mu$-a.e. for every Borel probability measure  $\mu$ such that $|\hat \mu(n)|^2 \leq  K |n|^{-\beta}$ for some $\beta > 1- 2p$.

\section{Remark for Gaussian random series}
In the particular case of Gaussian random variables, we obtain that we can get rid of the factor $\log_2^2(1+n)$ thanks to the Sudakov--Fernique lemma. We have the following: 
\begin{theorem}
Let $(a_n)$ be a sequence of real or complex numbers and $(X_n)$ be a sequence of real or complex Gaussian identically distributed random variables with zero mean.  If
\begin{eqnarray} \label{Kaziba_enyanya}
\sum_{n,m=1}^\infty |a_n| |a_m| |\mathbb{E}( X_n  \overline{X_m})| < \infty, \end{eqnarray}
 then the series $\sum_{n=1}^\infty X_n$ converges almost surely.   
\end{theorem}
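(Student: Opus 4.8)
The plan is to prove that the partial sums $S_j=\sum_{n=1}^{j}a_nX_n$ form an almost surely Cauchy sequence, the engine being a comparison, via the Sudakov--Fernique inequality, of the Gaussian process $(S_j)$ with a Gaussian process having \emph{independent} increments and square-summable coefficients. This independent comparison is exactly what allows the logarithmic factors of Theorem~\ref{Nderhe} to be discarded. Throughout I write $\gamma(n,m)=\mathbb{E}(X_n\overline{X_m})$ and read the statement as concerning $\sum_{n=1}^{\infty}a_nX_n$.

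First I would extract from hypothesis~(\ref{Kaziba_enyanya}) the auxiliary nonnegative coefficients defined by
$$b_n^{2}=\sum_{m=1}^{\infty}|a_n|\,|a_m|\,|\gamma(n,m)|,$$
so that $\sum_n b_n^{2}$ equals the finite quantity in~(\ref{Kaziba_enyanya}). The key algebraic observation is that for \emph{any} finite block of indices $R$ one has $\mathbb{E}\bigl|\sum_{n\in R}a_nX_n\bigr|^{2}=\sum_{n,m\in R}a_n\overline{a_m}\gamma(n,m)\le\sum_{n,m\in R}|a_n|\,|a_m|\,|\gamma(n,m)|\le\sum_{n\in R}b_n^{2}$, since summing $m$ over the full range dominates summing $m$ only over $R$. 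In particular $(S_j)$ is already Cauchy in $L^{2}$, hence convergent in probability; the whole task is to upgrade this to almost sure convergence.

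Next I would introduce the comparison process $H_j=\sum_{n=1}^{j}b_nY_n$, where $(Y_n)$ are i.i.d.\ standard Gaussians, whose increment variance over a block $R$ is exactly $\sum_{n\in R}b_n^{2}$. The block estimate says $\mathbb{E}|S_j-S_{j'}|^{2}\le\mathbb{E}|H_j-H_{j'}|^{2}$ for all $j,j'$, i.e.\ the canonical metric of $(S_j)$ is dominated by that of $(H_j)$. Working on a finite index set $\{N,\dots,M\}$ and treating the real and imaginary parts of $S_j$ separately as centred real Gaussian processes, Sudakov--Fernique in its diameter form $\mathbb{E}\sup_{s,t}(G_s-G_t)=2\,\mathbb{E}\sup_s G_s$ yields
$$\mathbb{E}\Bigl(\max_{N\le j,j'\le M}|S_j-S_{j'}|\Bigr)\le 2\,\mathbb{E}\Bigl(\sup_{j,j'\ge N}|H_j-H_{j'}|\Bigr)=:2\varepsilon_N.$$
Since $\sum_n b_n^{2}<\infty$, Doob's (or Lévy's) maximal inequality gives $\mathbb{E}\sup_{j\ge N}|H_j-H_N|^{2}\le 4\sum_{n>N}b_n^{2}$, whence $\varepsilon_N\to 0$ as $N\to\infty$.

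Finally I would let $M\to\infty$ by monotone convergence to obtain $\mathbb{E}\bigl(\sup_{j,j'\ge N}|S_j-S_{j'}|\bigr)\le 2\varepsilon_N\to 0$; as this tail oscillation is non-increasing in $N$, it tends to $0$ in $L^{1}$ and hence almost surely, so $(S_j)$ is almost surely Cauchy and therefore converges almost surely. I expect the main obstacle to be twofold: first, choosing the coefficients $b_n$ so that the increment-variance domination holds \emph{simultaneously} over all blocks, which is precisely what the absolute values in~(\ref{Kaziba_enyanya}) make possible and is the point where dependence is absorbed into an independent comparison; and second, passing cleanly from the one-sided Sudakov--Fernique bound on $\mathbb{E}\sup_t G_t$ to the two-sided oscillation bound, which I handle through the diameter identity together with the real/imaginary splitting in the complex case.
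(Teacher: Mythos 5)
Your proposal is correct and takes essentially the same route as the paper: both compare the partial-sum process via the Sudakov--Fernique inequality with an independent Gaussian sequence whose variances are the row sums $\sum_{m}|a_n|\,|a_m|\,|\mathbb{E}(X_n\overline{X_m})|$ (your $b_n^2$, the paper's $Y_n$ coefficients), and then control that independent comparison process by a maximal inequality for sums of independent variables. The only differences are cosmetic --- the paper concludes via Markov's inequality on tail suprema and a Fatou argument following Kahane, where you use a monotone oscillation bound, and you handle the real/imaginary splitting in the complex case more explicitly than the paper does.
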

\begin{proof}
We shall  first prove the following inequality:
 \begin{eqnarray} \label{smewqweq234}
\mathbb{E}\left(\sup_{1\leq n \leq N}|X_1 + X_2 + \ldots+ X_n|\right) \leq 2 \left(\sum_{n, m = 1}^N |\mathbb{E}( X_n\overline{X_m})|\right)^{1/2}.
\end{eqnarray} 
Let $Z_1, Z_2, \ldots, Z_N$ be i.i.d Gaussian random variables with zero mean and unit variance. 
Consider the sequence of random variables $(Y_n)$, $1 \leq n \leq N$, given by 
           $$Y_n = \left(\sum_{j=1}^N |\mathbb{E}(X_n \overline{X_j})|\right)^{1/2} Z_n.$$
Clearly for all $1 \leq n \leq m \leq N,$
 $$\mathbb{E}(|X_{n} + X_{n+1} + \ldots + X_m|^2) \leq \mathbb{E}(|Y_{n} + Y_{n+1} + \ldots + Y_m|^2).$$
Then by the classical Sudakov-Fernique inequality 
      $$\mathbb{E}\left(\sup_{1\leq n \leq N}|X_1 + X_2 + \ldots+ X_n|\right) \leq \mathbb{E}\left(\sup_{1\leq n \leq N}|Y_1 + Y_2 + \ldots+ Y_n|\right).$$              
 Now since the variables $(Y_n)$ are independent, it is well-known that 
   $$\mathbb{E}\left(\sup_{1\leq n \leq N}|Y_1 + Y_2 + \ldots+ Y_n|\right) \leq 2 \mathbb{E}\left(|Y_1 + Y_2 + \ldots+ Y_N|\right).$$
   Since obviously $(\mathbb{E}|X|)^2\leq \mathbb{E}(|X|^2)$, this yields,
    $$\mathbb{E}\left(\sup_{1\leq n \leq N}|X_1 + X_2 + \ldots+ X_n|\right) \leq 2 \left(\mathbb{E}\left(|Y_1 + Y_2 + \ldots+ Y_N|^2\right)\right)^{1/2}.$$
Hence
      $$\mathbb{E}\left(\sup_{1\leq n \leq N}|X_1 + X_2 + \ldots+ X_n|\right) \leq 2 \left(\sum_{k,j=1}^N |\mathbb{E}(X_k \overline{X_j})|\right)^{1/2}.$$
Now fix a real number $r>0$ and  $m \in \mathbb{N}$. Then 
\begin{eqnarray*}
\mathbb{P}\left(\sup_{1 \leq j < \infty}|X_m + X_{m+1} + \ldots + X_{m+j}|^{1/2} > r\right) &\leq & \frac{1}{r^2} \mathbb{E}\left(\sup_{1 \leq j < \infty}|X_m + X_{m+1} + \ldots + X_{m+j}|\right)\\
 & \leq & \frac{2}{r^2} \left(\sum_{k,j=m}^\infty |\mathbb{E}(X_k \overline{X_j})|\right)^{1/2}.
\end{eqnarray*}
Since  $\sum_{k,j=1}^\infty |\mathbb{E}(X_k \overline{X_j})| < \infty$, then 
 $$\lim_{m\to \infty} \sum_{k,j=m}^\infty |\mathbb{E}(X_k \overline{X_j})| = 0.$$
Hence
$$\lim_{m\to \infty} \mathbb{P}\left(\sup_{1 \leq j < \infty}|X_m + X_{m+1} + \ldots + X_{m+j}|^{1/2} > r\right) = 0.$$
Since $r$ can be taken arbitrary small, this yields
 $$\lim_{m\to \infty} \mathbb{P}\left(\sup_{1 \leq j < \infty}|X_m + X_{m+1} + \ldots + X_{m+j}|^{1/2} > 0\right) = 0.$$
This implies by an application of Fatou's lemma (as in Kahane \cite[p 30]{Kahane_1985}) that the series $\sum_{n=1}^\infty X_n$ converges almost surely.   \end{proof}

\end{document}